\newtheorem{theorem}{Theorem}[section]
\newtheorem{corollary}[theorem]{Corollary}
\newtheorem{lemma}[theorem]{Lemma}
\newtheorem{proposition}[theorem]{Proposition}
\newtheorem{definition}[theorem]{Definition}
\def\beq{ \begin{equation} }
\def\eeq{ \end{equation} }
\def\ep{\epsilon}
\def\square{\vcenter{\vbox{\hrule height .4pt
  \hbox{\vrule width .4pt height 5pt \kern 5pt
        \vrule width .4pt} \hrule height .4pt}}}
\def\P{\mathbb{P}}
\def\ZZ{\mathbb{Z}}
\def\FF{\mathcal{F}}
\def\EE{\mathcal{E}}
\def\HH{\mathcal{H}}
\def\AA{\mathcal{A}}
\def\ep{\varepsilon}
\definecolor{darkblue}{rgb}{0,0,0.6}
\definecolor{darkgreen}{rgb}{0,0.7,0}
\def\Unif{\mathrm{Unif}}
\def\ab{\mathrm{ab}}
\def\1{\mathds{1}}
\def\floor#1{\lfloor #1 \rfloor}
\numberwithin{equation}{section}
\def\mix{\mathrm{mix}}
\def\Cl{\mathrm{Cl}}
\def\TV{\mathrm{TV}}
\def\id{\mathrm{id}}
\title[Conjugacy-invariant random walks on nilpotent groups]{Conjugacy-invariant random walks on nilpotent groups}
\author{Xiangying Huang}
\begin{document}
\maketitle
\begingroup
\renewcommand{\thefootnote}{}
\footnotetext{Department of Statistics and Operations Research, University of North Carolina at Chapel Hill, Chapel Hill, NC 27599, USA; Email: zoehuang@unc.edu.}
\endgroup
\begin{abstract}
We establish bounds on the mixing times of conjugacy-invariant random walks on finite nilpotent groups in terms of the mixing times of their projections onto the abelianization. This comparison framework shows that, in several natural cases of interest, the mixing behavior on a nilpotent group is governed by that of the projected walk on the abelianization, reducing the study of mixing to a simpler problem in the Abelian setting. As an application, these bounds yield cutoff for two examples of conjugacy-invariant walks on unit upper-triangular matrix groups previously studied by Arias-Castro, Diaconis, and Stanley (2004) and by Nestoridi (2019).
\end{abstract}

\section{Introduction}

We consider a continuous-time random walk $X=(X_t)_{t\ge 0}$ on a finite group $G$.
At each jump time, the walk updates its state by right-multiplying the current
position by an independent group element sampled from a probability distribution
$\mu$ on $G$, which we refer to as the \emph{jump distribution} of the walk.
The walk is said to be \emph{conjugacy invariant} if its jump distribution satisfies
\[
\mu(z^{-1}xz)=\mu(x)\qquad \text{for all } x,z\in G.
\]

Conjugacy-invariant random walks have been studied in a variety of finite-group settings. This property is automatic in the Abelian case, making Abelian groups the simplest examples. A classical non-Abelian example is the random transposition walk on the symmetric group $G = \mathcal{S}_n$, in which each jump consists of selecting two distinct labels uniformly at random and swapping them, equivalently right-multiplying by a uniformly chosen transposition, so that the jump distribution is supported on the conjugacy
class of transpositions. The first proof of cutoff for this chain was given by Diaconis and Shahshahani \cite{diaconis1981generating} using representation theory.
Subsequently, a probabilistic perspective was developed in the work of Berestycki, Schramm, and Zeitouni \cite{BerestyckiSchrammZeitouni2011} and extended by Berestycki and {\c{S}}eng{\"u}l \cite{berestycki2019cutoff}, providing a general understanding of conjugacy-invariant walks on $ \mathcal{S}_n$.

For conjugacy-invariant walks on \emph{nilpotent groups}, early progress likewise relied on representation-theoretic tools. A natural and extensively studied example is the group $\mathbb{U}_n(p)$ of $n$-dimensional  unit upper-triangular matrices over $\mathbb{Z}_p:=\ZZ/p\ZZ$, whose character theory is notoriously intricate and long posed a significant barrier to the analysis of such walks. 
The introduction of superclass theory by Andr\'e, Carter, and Yan
(see, e.g., \cite{andre1995basic, andre1998regular, andre2002basic,
yan2001representation, yan2010representations}) provided a tractable
representation-theoretic framework for algebra groups based on
superclasses and supercharacters, which form a coarser analogue of
conjugacy classes and irreducible characters. This framework is well suited to conjugacy-invariant random walks and
was applied by Arias-Castro, Diaconis, and Stanley \cite{arias2004super}
to analyze a conjugacy-invariant random walk with jump distribution
\beq\label{superclass_walk}
\mu(x)=
\begin{cases}
\displaystyle \frac{1}{2(n-1)p^{\,n-2}}, & \text{if } x\in \cup_{i=1}^{n-1}C_i(\pm 1),\\
0,& \text{otherwise},
\end{cases}
\eeq
where $E_{i,i+1}$ denotes the $n\times n$ matrix with a $1$ in the $(i,i+1)$ entry
and zeros elsewhere, and $
C_i(\pm 1)
= \{ x (I \pm E_{i,i+1}) x^{-1} : x\in \mathbb{U}_n(p) \}
$
denotes the conjugacy class of $I\pm E_{i,i+1}$. This walk is shown to have mixing time of  order
$\Theta(p^2 n\log n)$.  Later,  Nestoridi \cite{nestoridi2019super} analyzed a similar walk with jump distribution 
\beq\label{superclass_walk_nestoridi}
\nu(x)=
\begin{cases}
\displaystyle \frac{1}{4(n-1)p^{\,n-2}}, & \text{if } x\in \cup_{i=1}^{n-1}(C_i(\pm 1)\cup C_i(\pm \sqrt{a})),\\
0,& \text{otherwise},
\end{cases}
\eeq
where $a=\lfloor \sqrt{p} \rfloor+\1\{\lfloor \sqrt{p} \rfloor \text{ is even}\}$ and $C_i(\pm a)=\{ x (I \pm a E_{i,i+1}) x^{-1} : x\in \mathbb{U}_n(p) \}$, and proved an upper bound $O(p n \log n)$ on the mixing time.

Despite the  involved representation-theoretic machinery used in \cite{arias2004super, nestoridi2019super} to bound mixing times, the mechanism governing the convergence of both walks appears to be considerably simpler: the walks behave similarly to their respective projections
\[
X_t^{\ab} :=[G,G] X_t \quad \text{with } G = \mathbb{U}_n(p),
\]
onto the abelianization $G_{\ab} := G/[G,G]$. (Since $[G,G]$ is normal in $G$, we
freely identify left and right cosets throughout.) This quotient is naturally
isomorphic to the first superdiagonal of the matrix, and hence to
$\mathbb{Z}_p^{n-1}$, so that the projected walks are equivalent to product chains on $\ZZ_p^{n-1}$ where each coordinate evolves independently. Using this observation, it is straightforward to recover the mixing-time bounds established in \cite{arias2004super, nestoridi2019super}. Moreover, we will see in Corollary \ref{cutoff} that the same mechanism gives rise to cutoff for both walks.

More generally, one asks whether mixing and cutoff for any conjugacy-invariant random walk on a finite nilpotent group $G$ can be understood through its projected process on the abelianization $G_{\ab}$. Theorem~\ref{reduction_abelianization} addresses this question by establishing a quantitative comparison between the mixing times of the walk on $G$ and those of its projection onto $G_{\ab}$.

 \bigskip
 Let $P$ be the transition matrix of the random walk on a finite nilpotent group $G$, and define the associated
continuous-time transition kernel by
$
P_t(x,y):=\sum_{k=0}^\infty \frac{t^k}{k!} P^k(x,y)\, e^{-t} 
$
for $x,y\in G$ and $t\in \mathbb{R}_+$. By (right) invariance of the walk, we may assume without loss of generality that it starts
at the identity $\id$. Let $\pi$ denote the uniform measure on $G$, which is
the stationary distribution. Throughout, we assume that the walk is irreducible.

The total variation mixing time is defined by
\beq\label{tv_mixing_time}
t^{\TV}_{\mathrm{mix}}(G,\varepsilon)
:=\inf\left\{t\geq 0:\, \| P_t(\id,\cdot)-\pi\|_{\TV}\le\varepsilon\right\},
\qquad \varepsilon\in(0,1),
\eeq
where $
\| P_t(\id,\cdot)-\pi\|_{\TV}:=\frac{1}{2}\sum_{x\in G}\bigl|P_t(\id,x)-\pi(x)\bigr|
$ denotes the total variation distance.
We also define the $\ell^2$-mixing time by
\beq\label{ell2_mixing_time}
t_{\mathrm{mix}}^{\ell^2}(G,\varepsilon)
:=\inf\left\{t\geq 0:\,
\left\| \frac{P_t(\id,\cdot)}{\pi(\cdot)}-1\right\|_{2,\pi}\le\varepsilon
\right\},
\qquad \varepsilon>0,
\eeq
where $\|f\|_{2,\pi}
:=\left(\sum_{x\in G} \pi(x)\, f^2(x)\right)^{1/2}$. For the projected walk $X^\ab=[G,G]X$ on $G_{\ab}$, the corresponding mixing times are denoted by $t^{\TV}_{\mathrm{mix}}(G_{\ab},\varepsilon)$ and
$t^{\ell^2}_{\mathrm{mix}}(G_{\ab},\varepsilon)$.

\smallskip
For any conjugacy-invariant random walk $X=(X_t)_{t\ge 0}$ with jump
distribution $\mu$, the support of $\mu$ is a union of disjoint conjugacy
classes of $G$. Consequently, there exists a set
$S=\{s_a : a\in[k]\}\subseteq G$ containing exactly one representative from
each conjugacy class intersecting $\operatorname{supp}(\mu)$ such that
\beq\label{def_S}
\operatorname{supp}(\mu)
= \bigsqcup_{a\in [k]} \Cl(s_a),
\eeq
where $\Cl(s)=\{x^{-1} s x : x\in G\}$ denotes the conjugacy class of $s$.
The set $S$ is unique up to choice of representatives for these conjugacy
classes. The assumption of irreducibility of the walk is equivalent to the condition that $S$ generates 
$G$.

We obtain the following comparison result for conjugacy-invariant random walks on a nilpotent group $G$; notably, no reversibility assumption on $\mu$ is required. 

\begin{theorem}\label{reduction_abelianization}
Consider a rate-$1$ conjugacy-invariant random walk on a finite nilpotent group $G$ with jump distribution $\mu$ and an associated set $S=\{s_a : a \in [k]\}$ as in \eqref{def_S}, with uniform stationary distribution. Let $\mu_*:=\min_{a\in [k]} \mu(\Cl(s_a))$. Then, for every $\varepsilon\in(0,1)$, we have
\beq\label{mixing_time_comparison}
t_{\mix}^{\TV}(G_{\ab},\varepsilon)
\leq
t_{\mix}^{\TV}(G,\varepsilon)
\leq
 \max\{ t_{\mix}^{\ell^2}(G_{\ab},\varepsilon/2), \mu_*^{-1}(\log k+2\log(4/\ep))\}.
\eeq
\end{theorem}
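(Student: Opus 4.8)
The plan is to establish the two inequalities separately. \emph{Lower bound.} The quotient map $\phi\colon G\to G_{\ab}$ pushes $P_t(\id,\cdot)$ forward to $P_t^{\ab}(\id,\cdot)$ and $\pi$ forward to the uniform measure $\pi_{\ab}$ on $G_{\ab}$, and total variation distance cannot increase under a pushforward, so $\|P_t^{\ab}(\id,\cdot)-\pi_{\ab}\|_{\TV}\le\|P_t(\id,\cdot)-\pi\|_{\TV}$ for every $t$; evaluating at $t=t_{\mix}^{\TV}(G,\ep)$ gives the left-hand inequality.

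\emph{Upper bound.} Write $H:=[G,G]$, so the fibers of $\phi$ are exactly the cosets of $H$. Conditioning on the value of $X_t^{\ab}$ yields the standard estimate
\[
\|P_t(\id,\cdot)-\pi\|_{\TV}\ \le\ \|P_t^{\ab}(\id,\cdot)-\pi_{\ab}\|_{\TV}\ +\ \E\,\bigl\|\mathcal L(X_t\mid X_t^{\ab})-\Unif(\phi^{-1}(X_t^{\ab}))\bigr\|_{\TV}.
\]
Bounding total variation by half the $\ell^2$ distance, the first term is at most $\tfrac12\bigl\|P_t^{\ab}(\id,\cdot)/\pi_{\ab}-1\bigr\|_{2,\pi_{\ab}}$, which is $\le\ep/4$ as soon as $t\ge t_{\mix}^{\ell^2}(G_{\ab},\ep/2)$. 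The crux is the second ("fiber") term, which I would control via the following structural claim: \emph{let $\mathcal A_t$ be the event that by time $t$ the walk has made at least one jump in each of the $k$ conjugacy classes $\Cl(s_1),\dots,\Cl(s_k)$; then, on $\mathcal A_t$, the conditional law of $X_t$ given the jump times and jump classes up to time $t$ is exactly uniform on $\phi^{-1}(X_t^{\ab})$} (note $X_t^{\ab}$ is a deterministic function of the jump classes, since conjugation is trivial on $G_{\ab}$). Granting the claim, $\mathcal L(X_t\mid X_t^{\ab}=y)$ is a mixture of $\Unif(\phi^{-1}(y))$ with weight $\P(\mathcal A_t\mid X_t^{\ab}=y)$ and an arbitrary probability measure carrying the remaining mass, so the fiber term is at most $\P(\mathcal A_t^{\,c})$. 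Since jumps in class $\Cl(s_a)$ form a Poisson process of rate $\mu(\Cl(s_a))\ge\mu_*$, a union bound gives $\P(\mathcal A_t^{\,c})\le k\,e^{-\mu_* t}$, which is $\le\ep^2/16$ once $t\ge\mu_*^{-1}(\log k+2\log(4/\ep))$. Hence at $t=\max\{t_{\mix}^{\ell^2}(G_{\ab},\ep/2),\ \mu_*^{-1}(\log k+2\log(4/\ep))\}$ we obtain $\|P_t(\id,\cdot)-\pi\|_{\TV}\le\ep/4+\ep^2/16\le\ep$, which is the right-hand inequality; note that no reversibility is used anywhere.

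\emph{Proof of the structural claim.} I would argue by induction on the nilpotency class $c$ of $G$; for $c=1$ the map $\phi$ is the identity and the claim is vacuous. For $c\ge2$, let $Z:=\gamma_c(G)$, the last nontrivial term of the lower central series, which is central in $G$. The projected walk on $\bar G:=G/Z$ (of class $c-1$) is again conjugacy invariant with a generating class-set $\{\bar s_a\}$, its conjugators $h_iZ$ are i.i.d.\ uniform on $\bar G$ given the jump data, and $\mathcal A_t$ implies every $\bar G$-class is seen; so the inductive hypothesis applies, and on $\mathcal A_t$, conditionally on the jump data, $X_tZ$ is uniform on $\phi^{-1}(X_t^{\ab})/Z$. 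It therefore suffices to show that, on $\mathcal A_t$ and conditionally on the jump data, the conditional law of $X_t$ is invariant under right multiplication by every element of $Z$. Write the $j$-th jump as $h_j^{-1}s_{a_j}h_j=s_{a_j}[s_{a_j},h_j]$ with $h_j$ uniform on $G$ and independent of everything else. For fixed $\gamma\in\gamma_{c-1}(G)$, replacing $h_j$ by $h_j\gamma$ is a measure-preserving change of variables; since $[\gamma_2(G),\gamma_{c-1}(G)]\subseteq\gamma_{c+1}(G)=\{e\}$ the commutator identities give $[s_{a_j},h_j\gamma]=[s_{a_j},\gamma]\,[s_{a_j},h_j]$ with $[s_{a_j},\gamma]\in[G,\gamma_{c-1}(G)]=Z$ central, so this change of variables multiplies $X_t$ (on either side) by $[s_{a_j},\gamma]$. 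As $\gamma$ ranges over $\gamma_{c-1}(G)$ the element $[s_{a_j},\gamma]$ ranges over the subgroup $[s_{a_j},\gamma_{c-1}(G)]\le Z$, so the conditional law of $X_t$ is invariant under right multiplication by the subgroup generated by $\bigcup_{a\text{ seen}}[s_a,\gamma_{c-1}(G)]$; on $\mathcal A_t$ this subgroup equals $\langle[s_a,\gamma_{c-1}(G)]:a\in[k]\rangle=[\langle S\rangle,\gamma_{c-1}(G)]=[G,\gamma_{c-1}(G)]=Z$, using that $S$ generates $G$. Combined with the inductive conclusion, $X_t$ is uniform on $\phi^{-1}(X_t^{\ab})$.

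\emph{Main obstacle.} The delicate point is the structural claim — in particular, verifying that the conjugator randomness at a single jump sweeps out the full subgroup $[s_a,\gamma_{c-1}(G)]$ of $Z$ and that these contributions across jumps combine, through the commutator calculus and the hypothesis that $S$ generates $G$, to give invariance under all of $Z=[G,\gamma_{c-1}(G)]$, as well as carrying the conditioning on the jump data correctly through the induction so that the inductive hypothesis genuinely applies to the projected walk. The remaining ingredients — the conditioning inequality, the $\ell^2$-to-$\TV$ bound for the base walk, and the coupon-collector estimate — are routine.
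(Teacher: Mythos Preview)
Your argument is correct and takes a genuinely different route from the paper. The paper bounds $\|P_t-\pi\|_{\TV}$ via the collision estimate $4\|P_t-\pi\|_{\TV}^2\le|G|\,\P(X_t=X_t')-1$ with an independent copy $X'$, then decomposes the collision event along the lower central series: writing $\EE_\ell=\{X(X')^{-1}\in G_\ell\}$, it shows $\P(\EE_{\ell+1}\cap\AA\cap\AA')\le|G_\ell/G_{\ell+1}|^{-1}\P(\EE_\ell\cap\AA\cap\AA')$ by isolating a factor $G_{\ell+1}f^{(\ell-1)}=\prod_i G_{\ell+1}[s_{\sigma_i}^{\eta_i},U_{i,\ell-1}]$ that is conditionally uniform on $G_\ell/G_{\ell+1}$, and iterates down to $\EE_2$. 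You instead split $\|P_t-\pi\|_{\TV}$ directly into a base term on $G_{\ab}$ plus a fiber term, and prove the sharper statement that on $\AA_t$ the walk is \emph{exactly} uniform on its $[G,G]$-coset given the jump data, by induction on the nilpotency class (peeling off the central $Z=\gamma_c(G)$ via the measure-preserving substitution $h_j\mapsto h_j\gamma$ with $\gamma\in\gamma_{c-1}(G)$). Both proofs rest on the same mechanism---uniform conjugation yields uniform commutators in each successive quotient $G_\ell/G_{\ell+1}$, and once every class is seen these commutators generate the full quotient---but your packaging avoids the second copy and the $\ell^2$ machinery on $G$, and makes the role of $\AA_t$ more transparent. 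One small remark on presentation: your inductive hypothesis is cleanest when stated as a deterministic fact about products $\prod_i U_i^{-1}g_iU_i$ with $\langle g_i\rangle=G$ and $U_i$ i.i.d.\ uniform, rather than as a statement about ``the projected walk''; this sidesteps any concern about conjugacy classes merging under $G\to\bar G$ and about which $\sigma$-field of jump data one is conditioning on.
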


\medskip
 In cases where the total variation and $\ell^2$ mixing times of the projected
walk $X^\ab$ coincide up to lower-order errors, Theorem~\ref{reduction_abelianization} immediately
implies the occurrence of cutoff. More precisely, a sequence of Markov chains $(X^{(n)})_{n\ge 1}$ on $\Omega^{(n)}$ is said to exhibit
\emph{cutoff} at times $t_n$ with window size $w_n=o(t_n)$ if
\[
\lim_{c\to\infty}\,\liminf_{n\to\infty}
d^{(n)}_{\mathrm{TV}}(t_n-cw_n)=1
\quad\text{and}\quad
\lim_{c\to\infty}\,\limsup_{n\to\infty}
d^{(n)}_{\mathrm{TV}}(t_n+cw_n)=0,
\]
where
$
d^{(n)}_{\mathrm{TV}}(t)
:=\max_{x\in \Omega^{(n)}}
\| \P_x( X_t^{(n)}=\cdot)-\pi^{(n)}(\cdot)\|_{\TV}
$
denotes the total variation distance to the stationary distribution
$\pi^{(n)}$. 

To simplify notation, we drop the superscript $(n)$ in what follows, as the meaning will be clear from context.

As a consequence of Theorem~\ref{reduction_abelianization}, we obtain cutoff for the walks \eqref{superclass_walk} and \eqref{superclass_walk_nestoridi}.

\begin{corollary}\label{cutoff}
Let $(\mathbb{U}_n(p))_{n\ge 1}$ be a sequence of groups of unit upper-triangular
$n\times n$ matrices over $\mathbb{Z}_p$, where $p=p_n$ is an integer that may
depend on $n$ and satisfies $p\ge 6$.
Consider the associated sequence of continuous-time, rate-$1$ random walks.
\begin{enumerate}[label=(\alph*)]
\item The walk driven by the jump distribution $\mu$ in \eqref{superclass_walk}
exhibits cutoff at time
\[
\frac{(n-1)\log (n-1)}{2\bigl(1-\cos(2\pi/p)\bigr)}.
\]

\item The walk driven by the jump distribution $\nu$ in \eqref{superclass_walk_nestoridi}
exhibits cutoff at time
\[
\frac{(n-1)\log (n-1)}{(1-\cos(2\pi/p))+(1-\cos(2\pi\lfloor \sqrt{a}\rfloor/p))},
\]
where $a=\lfloor \sqrt{p} \rfloor+\1\{\lfloor \sqrt{p} \rfloor \text{ is even}\}$.
\end{enumerate}
\end{corollary}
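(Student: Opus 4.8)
The plan is to derive both parts from Theorem~\ref{reduction_abelianization} applied to $G=\mathbb{U}_n(p)$, after computing the abelianized walk explicitly. The commutator subgroup $[\mathbb{U}_n(p),\mathbb{U}_n(p)]$ consists exactly of the unit upper-triangular matrices that are trivial on the first superdiagonal, so $x\mapsto(x_{1,2},\dots,x_{n-1,n})$ identifies $G_{\ab}$ with $\mathbb{Z}_p^{n-1}$; since conjugation preserves the superdiagonal, a centralizer computation gives $|C_i(c)|=p^{\,n-2}$ for every $c\neq0$, so every conjugacy class in $\operatorname{supp}(\mu)$ (resp.\ $\operatorname{supp}(\nu)$) has mass exactly $\tfrac1{2(n-1)}$ (resp.\ $\tfrac1{4(n-1)}$) and projects to the single point $\pm e_i$ (resp.\ $\pm e_i$ or $\pm\lfloor\sqrt{a}\rfloor e_i$) of $\mathbb{Z}_p^{n-1}$. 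Hence $X^{\ab}$ is a product chain on $\mathbb{Z}_p^{n-1}$ with independent coordinates, coordinate $i$ being a continuous-time random walk updated at rate $\tfrac1{n-1}$ with increment uniform on $\{\pm1\}$ for \eqref{superclass_walk} and on $\{\pm1,\pm\lfloor\sqrt{a}\rfloor\}$ for \eqref{superclass_walk_nestoridi}; in particular the stationary measure is uniform, and $\mu_*=\tfrac1{2(n-1)}$, $k=2(n-1)$ in case (a), $\nu_*=\tfrac1{4(n-1)}$, $k=4(n-1)$ in case (b). Writing $t_n$ for the stated cutoff time and $w_n:=t_n/\log(n-1)=o(t_n)$, the upper inequality of \eqref{mixing_time_comparison} will yield $d_{\TV}(t_n+cw_n)\to0$, and the lower inequality, together with a distinguishing statistic for $X^{\ab}$, will yield $d_{\TV}(t_n-cw_n)\to1$, both as $c\to\infty$.

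For the $\ell^2$ bound I would run Fourier analysis on $\mathbb{Z}_p$. A single coordinate has kernel eigenvalues $e^{-t\lambda_j}$, $j\in\{0,\dots,p-1\}$, with $\lambda_j=\tfrac1{n-1}(1-\cos(2\pi j/p))$ in case (a) and $\lambda_j=\tfrac1{2(n-1)}\bigl[(1-\cos(2\pi j/p))+(1-\cos(2\pi\lfloor\sqrt{a}\rfloor j/p))\bigr]$ in case (b), so by independence $\bigl\|P^{\ab}_t(\cdot)/\pi-1\bigr\|_{2,\pi}^2=\bigl(1+\sum_{j=1}^{p-1}e^{-2t\lambda_j}\bigr)^{n-1}-1$. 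The hypothesis $p\ge6$ forces $\lambda_j\ge3\lambda$ for $2\le j\le p-2$, where $\lambda:=\lambda_1=\lambda_{p-1}$, since $1-\cos(4\pi/p)=2(1-\cos(2\pi/p))(1+\cos(2\pi/p))\ge3(1-\cos(2\pi/p))$ and $1-\cos(2\pi j/p)$ increases for $j\le p/2$; thus $\sum_j e^{-2t\lambda_j}=2e^{-2t\lambda}(1+o(1))$ uniformly in $p$ once $t\lambda\to\infty$, giving $\bigl\|P^{\ab}_t/\pi-1\bigr\|_{2,\pi}^2\le\exp\bigl(2(n-1)e^{-2t\lambda}(1+o(1))\bigr)-1$. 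By construction $t_n$ is the value with $2t_n\lambda=\log(n-1)$ and $w_n=\tfrac1{2\lambda}$, so at $t=t_n+cw_n$ this is $\exp(2e^{-c}(1+o(1)))-1\to0$ as $c\to\infty$; hence $t_{\mix}^{\ell^2}(G_{\ab},\varepsilon/2)\le t_n+c(\varepsilon)w_n$. Together with a bound showing the second term $\mu_*^{-1}(\log k+2\log(4/\varepsilon))=O_\varepsilon(n\log n)$ in \eqref{mixing_time_comparison} is likewise $\le t_n+c(\varepsilon)w_n$ (discussed below), Theorem~\ref{reduction_abelianization} gives $t_{\mix}^{\TV}(G,\varepsilon)\le t_n+c(\varepsilon)w_n$, i.e.\ $\lim_{c\to\infty}\limsup_n d_{\TV}(t_n+cw_n)=0$.

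For the matching lower bound I would use the left inequality of \eqref{mixing_time_comparison} in pointwise form: $G\to G_{\ab}$ is a lumping, so $d_{\TV}(t)\ge\|P^{\ab}_t(\id,\cdot)-\pi_{G_{\ab}}\|_{\TV}$ for all $t$, and it suffices to distinguish $X^{\ab}_{t_n-cw_n}$ from uniform. I would take $W_t:=\sum_{i=1}^{n-1}\cos(2\pi(X^{\ab}_t)_i/p)$: from $\id$ one has $\E[W_t]=(n-1)e^{-t\lambda}$ and $\operatorname{var}(W_t)=O(n)$, while under $\pi_{G_{\ab}}$ (using $p\ge3$) $W$ has mean $0$ and variance $O(n)$. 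At $t=t_n-cw_n$ this gives $\E[W_t]=e^{c/2}\sqrt{n-1}$, exceeding both standard deviations by a factor of order $e^{c/2}$, so Chebyshev's inequality on each side forces $d_{\TV}(t_n-cw_n)\ge1-O(e^{-c})$ uniformly in $n$; hence $\lim_{c\to\infty}\liminf_n d_{\TV}(t_n-cw_n)=1$. The two halves give cutoff at $t_n$ with window $w_n$, where $t_n=\frac{(n-1)\log(n-1)}{2(1-\cos(2\pi/p))}$ for \eqref{superclass_walk} and $t_n=\frac{(n-1)\log(n-1)}{(1-\cos(2\pi/p))+(1-\cos(2\pi\lfloor\sqrt{a}\rfloor/p))}$ for \eqref{superclass_walk_nestoridi} are exactly the values with $2t_n\lambda=\log(n-1)$.

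The step I expect to require the most care is uniformity in $p=p_n$ over all integers $\ge6$, including $p\to\infty$, so that $X^{\ab}$ is not simply a product of i.i.d.\ copies of one fixed chain. On the upper side this means controlling the whole Fourier sum $\sum_{j=1}^{p-1}e^{-2t\lambda_j}$ rather than only its top term — precisely where $p\ge6$ is used — and checking that $\mu_*^{-1}(\log k+2\log(4/\varepsilon))$ does not overtake $t_n$: this is immediate once $p$ grows, since then $t_n=\Theta(p^2 n\log n)$ (resp.\ $\Theta(pn\log n)$) dominates $\Theta(n\log n)$, but for the smallest $p$ it comes down to comparing constants and, if needed, refining the generic second term by exploiting that a single step randomizes an entire rectangular block of commutator coordinates, so that the coordinates of $[G,G]$ mix while the abelianization mixes rather than afterwards. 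On the lower side the delicate point is a triangular-array second-moment (CLT) estimate for $W_t$ valid for a coordinate chain that changes with $n$; for walk (b) one must also confirm that $j=1$ realizes the coordinate spectral gap for the step set $\{\pm1,\pm\lfloor\sqrt{a}\rfloor\}$, which is what pins the stated constant.
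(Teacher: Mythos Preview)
Your overall strategy coincides with the paper's: both apply Theorem~\ref{reduction_abelianization} to the product chain on $\ZZ_p^{\,n-1}$, bound the $\ell^2$ distance via Fourier analysis on $\ZZ_p$, and use a Wilson-type lower bound (the paper cites Theorem~20.7 of \cite{wilmer2009markov}, which is your $W_t$ argument in packaged form). The genuine gap is in your control of the Fourier tail. From $\lambda_j\ge 3\lambda$ for $2\le j\le p-2$ you only obtain $\sum_{j=1}^{p-1} e^{-2t\lambda_j}\le 2e^{-2t\lambda}+(p-3)e^{-6t\lambda}$; at $t=t_n$ one has $e^{-2t\lambda}=(n-1)^{-1}$, so $(n-1)\sum_j e^{-2t\lambda_j}\le 2+(p-3)(n-1)^{-2}$, and the second term is unbounded when $p_n\gg n^2$. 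Hence the assertion ``$\sum_j e^{-2t\lambda_j}=2e^{-2t\lambda}(1+o(1))$ uniformly in $p$'' is false as stated. The paper treats bounded $p$ and $p\to\infty$ separately; in the latter regime it further splits the sum at $\tilde j=\lfloor\sqrt p\rfloor$, using the linear bound $1-\cos(2\pi\tilde j/p)\ge(1-\delta)\,2\pi^2\tilde j/p^2$ on small $\tilde j$ to produce a genuine geometric series, and bounds the remaining $p$ terms by $p\,e^{-cs/p}$, whose exponent is of order $s/p$ rather than $s/p^2$ and is therefore negligible at $s=t_n/(n-1)$. Any argument that exploits growth of $\lambda_j$ in $j$ (a quadratic lower bound on $1-\cos$ would also do) closes the gap; the flat inequality $\lambda_j\ge 3\lambda_1$ does not.

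A secondary point: your values $\mu_*=\tfrac1{2(n-1)}$ and $k=2(n-1)$ are correct, and then $\mu_*^{-1}\log k\sim 2(n-1)\log(n-1)$ indeed exceeds $t_n$ for $p$ near $6$, as you anticipated. The required fix is not the block-randomization idea you mention but the observation (implicit in the paper's writing $\mu_*=1/(n-1)$) that in the proof of Theorem~\ref{reduction_abelianization} the event $\AA$ only asks that the \emph{projections} of the visited classes generate $G_{\ab}$. Since $C_i(+1)$ and $C_i(-1)$ both project to a generator of the $i$-th $\ZZ_p$ factor, visiting either one suffices for that coordinate, and the relevant Poisson rate is their combined rate $1/(n-1)$; with this refinement the second term becomes $(n-1)\log(n-1)+O_\ep(n)\le t_n+O_\ep(w_n)$ for all $p\ge 6$.
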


\section{Related work}

\subsection{Abelianization as the dominant mixing mechanism}

The mixing behavior of random walks on groups of unit upper-triangular matrices has been extensively studied, as these groups form a canonical class of nilpotent groups. A recurring theme in this line of work is that the mixing behavior of random walks on $\mathbb{U}_n(p)$ (where $p$ is an integer) is often governed by their projection onto the abelianization, corresponding to the superdiagonal entries. Throughout this section, unless stated otherwise, we consider simple random walks whose jump distribution is uniform on its support, which we also refer to as the generating set of the walk.

One concrete instance is provided by Nestoridi and Sly \cite{nestoridi2023random}, who consider the simple random walk on $\mathbb{U}_n(p)$ generated by elementary row operations
$S = \{I \pm E_{i,i+1}:i \leq n-1\}$, where $E_{i,i+1}$ denotes the matrix with a single $1$ in the
$(i,i+1)$ entry and zeros elsewhere. Their analysis upper bounds the mixing time to be 
$O(p^2 n \log n + n^2 p^{o(1)})$, with the first term arising from the abelianization: the projected walk onto the abelianization is a simple random walk on $\ZZ_p^{n-1}$, which mixes in time $O(p^2n\log n)$.
This perspective also appears in the work of Hermon and Olesker-Taylor \cite{hermon2019cutoff}, who consider random Cayley graphs of $\mathbb{U}_n(p)$ with generating sets formed by i.i.d.\ uniformly distributed group elements. In this setting, and under mild regularity assumptions, they establish cutoff with high probability over the choice of the generating set and obtain sharp asymptotics for the mixing time, once again matching those of the projected walk onto the abelianization. With a different emphasis, Diaconis and Hough \cite{diaconis2021random} develop a general framework for random walks on $\mathbb{U}_n(p)$ with symmetric jump distributions under certain technical assumptions. Their results yield a precise, coordinate-wise description of mixing, showing in particular that coordinates on the $k$-th diagonal require order $p^{2/k}$ steps to mix. In this sense, mixing on the abelianization (i.e., $k=1$) plays a more prominent role than that of higher diagonals.

More broadly, a series of influential works \cite{diaconis1994moderate, diaconis2020application, diaconis1996nash} by Diaconis and Saloff-Coste develop methods for studying mixing on \emph{nilpotent groups} through functional inequalities and geometric ideas. In particular, for generating sets of bounded size, they establish that diameter-squared steps are both necessary and sufficient to reach uniformity, and that cutoff does not occur in this case. However, these bounds become significantly looser as the size of the generating set grows, and therefore do not directly apply to walks on matrix groups in the regime where the dimension $n$ diverges. 

In the author's prior work with Hermon \cite{hermon2024cutoff}, the regime of growing generating sets is studied for general nilpotent groups. Let $G$ be a finite nilpotent group, $r=r(G)$ its rank, and $L=L(G)$ its nilpotency class as defined in \eqref{lower_central_series}. It is shown that if the generating set has size at most $\frac{\log |G|}{8L r^L \log\log|G|}$, then the mixing time is determined, up to lower-order terms, by the dynamics of the projection onto the abelianization. For conjugacy-invariant walks on nilpotent groups, such as \eqref{superclass_walk}, the generating set is typically much larger and therefore falls outside this regime; nevertheless, we will see in this paper that the abelianization continues to play a dominant role.
Indeed, this dominance of the abelianization appears to be typical for random walks on nilpotent groups. When the generating set consists of i.i.d.\ uniformly distributed elements and under mild structural assumptions of the nilpotent groups, \cite{hermon2024cutoff} shows that the walk exhibits cutoff (with high probability) with sharp mixing time asymptotics determined by the dynamics of the projection onto the abelianization.

\subsection{Entropic criteria for cutoff}
A remarkable sequence of recent breakthroughs by Salez and coauthors \cite{salez2023cutoff, salez2025cutoff, pedrotti2025new} develops a conceptually profound understanding of the cutoff phenomenon, establishing sufficient conditions for cutoff in non-negatively curved Markov chains without requiring precise identification of the mixing time.

For a sequence $(X^{(n)})_{n\ge 1}$ of Markov chains with nonnegative
\emph{carr\'e du champ curvature} (see \cite{pedrotti2025new} for a detailed exposition), and whose transition kernels $P^{(n)}$ are symmetric
(i.e., $P^{(n)}(x,y)>0$ if and only if $P^{(n)}(y,x)>0$), the following
sufficient criterion for cutoff is established in \cite{pedrotti2025new}:
\beq\label{cutoff_criterion}
\frac{t^{(n)}_{\mix}}{ t^{(n)}_{\mathrm{MLS}} \log\log d^{(n)}}\to \infty \quad \text{ as $n\to\infty$}.
\eeq
Here $t_{\mix}=t^{(n)}_{\mix}:=t^{(n)}_{\mix}(1/4)$ denotes the total variation mixing time, and $d=d^{(n)}:=\max_{x\sim y}\frac{1}{P^{(n)}(x,y)}$ denotes the sparsity parameter of the transition kernel $P^{(n)}$, and $t_{\mathrm{MLS}}=t^{(n)}_{\mathrm{MLS}}$ denotes the inverse modified log-Sobolev constant. As discussed in Section~2 of \cite{pedrotti2025new}, this criterion applies in a wide range of settings and can be verified for many classical examples, including several conjugacy-invariant random walks on groups. Such walks are known to have nonnegative carr\'e du champ curvature; see Theorem 2 of \cite{hermon2024concentration}.
The examples presented in Section~2.1 of \cite{pedrotti2025new} further suggest that the condition \eqref{cutoff_criterion} is quite sharp, in the sense that the orders of $t_{\mix}$ and $t_{\mathrm{MLS}} \log\log d$ typically differ by at most logarithmic factors. 

Interestingly, our results provide an example, namely the walk \eqref{superclass_walk} on $\mathbb{U}_n(p)$, in which cutoff occurs even though condition \eqref{cutoff_criterion} fails. Suppose $p=p_n$ diverges as $n\to\infty$. It is a well-known fact (see, e.g., (3.15) of \cite{salez2025modernaspectsmarkovchains}) that for reversible Markov chains
\[
t_{\mathrm{MLS}} \ge \frac{ t_{\mathrm{rel}}}{2} \ge \frac{ t_{\mathrm{rel}}^{\ab}}{2} \asymp p^2 n,
\]
where $t_{\mathrm{rel}}$ denotes the relaxation time of the walk and $t_{\mathrm{rel}}^{\ab}$ that of its projection onto the abelianization. Moreover, the sparsity parameter of this walk is
$d = 2(n-1)p^{n-2}$, which implies
$\log\log d \asymp \max\{\log n, \log\log p\}$. On the other hand, by Corollary~\ref{cutoff}, we have $t_{\mix} \asymp p^2 n \log n$. Consequently, condition~\eqref{cutoff_criterion} does not hold, despite the presence of cutoff.

By contrast, the projected walk onto the abelianization is a product walk on $\ZZ_p^{\,n-1}$ with sparsity parameter $2(n-1)$, for which the criterion \eqref{cutoff_criterion} is satisfied and cutoff occurs. This contrast can be traced to the substantially larger sparsity parameter of the original chain, namely $2(n-1)p^{n-2}$, which is not sharp for determining cutoff in this setting. The key takeaway from this example is that, in the setting of conjugacy-invariant walks on nilpotent groups, the sufficient condition \eqref{cutoff_criterion} may admit a refinement that depends only on the abelianization.

\section{Preliminaries}

We begin by introducing the notation that will be used throughout.
 The lower central series of a group $G$ is defined
recursively by
\[
G_1 := G, \qquad \text{and}\qquad
G_{k+1} := [G_k, G] = \langle [x,y] : x\in G_k,\ y\in G \rangle  \quad \text{for $k\ge 1$},
\]
where $[x,y]=x^{-1}y^{-1}xy$. A group $G$ is called \emph{nilpotent} if its lower central series terminates after finitely many steps, namely
\beq\label{lower_central_series}
G = G_1 \trianglerighteq G_2 \trianglerighteq \cdots \trianglerighteq
G_L \trianglerighteq G_{L+1} = \{\id\}
\eeq
for some $L=L(G)$ that is referred to as the \emph{nilpotency class} of $G$. In this case, each quotient in
$\{G_\ell/G_{\ell+1}\}_{\ell\in[L]}$ is Abelian. In particular, let $G_{\ab}:=G/[G,G]$ denote the \emph{abelianization} of $G$. We will assume throughout that $G$ is a finite nilpotent group.

The following proposition collects several standard properties of commutators and nilpotent groups. 
Proofs may be found in many references, see e.g. \cite{dummit2004abstract}, and are omitted here. Part~(i) of Proposition~\ref{nilpotent_properties} is a fairly well known result obtained by an induction argument based on the three subgroup lemma, while part~(iii) follows from straightforward computations using the identity $[x,zy]=[x,z][x,y][z,[y,x]]^{-1}$ for $x,y,z\in G$.

 \begin{proposition}\label{nilpotent_properties}
Let $G$ be a finite nilpotent group. Then:
\begin{enumerate}[label=(\roman*),leftmargin=*, itemindent=0pt]
\item The lower central series of a nilpotent group $G$ is a strongly central series, i.e., $[G_i,G_j]$ is a subgroup of $G_{i+j}$  for all $i,j\geq 1$.
\item A set $S\subseteq G$ generates $G$ if and only if its projection $G_2 S:=\{ G_2s: s\in S\}$ generates the abelianization $G_{\mathrm{ab}}$. 
\item For $\ell\geq 2$, the map $\phi : G \times G_{\ell-1}  \to G_{\ell}/G_{\ell+1}$
given by $\phi(g,h) := G_{\ell+1}[g,h]$ is anti-symmetric and bi-linear. Namely, the following hold
for all $x,y\in G$ and $z,w\in G_{\ell-1}$:
\begin{align*}
G_{\ell+1}[x,z]&=G_{\ell+1}[z,x]^{-1},\\
G_{\ell+1}[x,zw] &= G_{\ell+1}[x,z]\;G_{\ell+1}[x,w],\\
G_{\ell+1}[xy,z]&=G_{\ell+1}[x,z]\;G_{\ell+1}[y,z],\\
G_{\ell+1}[x^i,z^j] &= G_{\ell+1}[x,z]^{i j}
\quad\text{for all } i,j \in \mathbb Z.
\end{align*}

\end{enumerate}
\end{proposition}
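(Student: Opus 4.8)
The plan is to establish part~(i) first, by induction, and then derive parts~(ii) and~(iii) from it together with elementary commutator identities. For part~(i), I would prove $[G_i,G_j]\le G_{i+j}$ for all $i,j\ge1$ by induction on $j$, keeping the statement uniform in $i$. The base case $j=1$ is the definition $[G_i,G_1]=[G_i,G]=G_{i+1}$. For the inductive step, fix $i$ and assume $[G_m,G_j]\le G_{m+j}$ for all $m$; using $G_{j+1}=[G_j,G]$ and symmetry of the bracket of subgroups, $[G_i,G_{j+1}]=[[G_j,G],G_i]$. I would then invoke the Three Subgroup Lemma in its ``modulo a normal subgroup'' form: if $N\trianglelefteq G$ and $[[A,B],C]\le N$ and $[[B,C],A]\le N$, then $[[C,A],B]\le N$. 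Apply it with $A=G$, $B=G_i$, $C=G_j$, and $N=G_{i+j+1}$, which is normal in $G$. The first hypothesis is $[[G,G_i],G_j]=[G_{i+1},G_j]\le G_{i+1+j}=N$, an instance of the inductive hypothesis; the second is $[[G_i,G_j],G]\le[G_{i+j},G]=G_{i+j+1}=N$, again by the inductive hypothesis and the definition of the lower central series. The conclusion is $[[G_j,G],G_i]=[G_i,G_{j+1}]\le N=G_{i+j+1}$, closing the induction. The only genuinely delicate point in the whole proposition lies exactly here: one must assign $G$, $G_i$, $G_j$ to the three slots of the lemma so that both of its hypotheses are already known.

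For part~(ii), the ``only if'' direction is immediate, since the canonical surjection $G\to G/G_2=G_{\ab}$ carries the generating set $S$ to the generating set $G_2 S=\{G_2 s:s\in S\}$. For the ``if'' direction, suppose $G_2 S$ generates $G_{\ab}$ and put $H:=\langle S\rangle$, so that $HG_2=G$. If $H\ne G$, choose a maximal subgroup $M$ with $H\le M\lneq G$. Since $G$ is nilpotent, $M$ is normal and of prime index, so $G/M$ is abelian, whence $G_2=[G,G]\le M$. But then $G=HG_2\le M\lneq G$, a contradiction; hence $H=G$. (Equivalently, this is the statement $[G,G]\le\Phi(G)$ combined with the non-generator property of the Frattini subgroup.)

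For part~(iii), fix $\ell\ge2$. By part~(i) (with $i=1$, $j=\ell-1$) one has $[g,h]\in[G,G_{\ell-1}]\le G_\ell$ for $g\in G$, $h\in G_{\ell-1}$, so $\phi$ is a well-defined map into $G_\ell/G_{\ell+1}$; and $[G_\ell,G_\ell]\le G_{2\ell}\le G_{\ell+1}$ by part~(i), so $G_\ell/G_{\ell+1}$ is abelian. Anti-symmetry $G_{\ell+1}[x,z]=G_{\ell+1}[z,x]^{-1}$ is just the identity $[x,z]^{-1}=[z,x]$. For bi-linearity I would combine the standard identities $[x,zw]=[x,w]\,[x,z]^{w}$ and $[xy,z]=[x,z]^{y}\,[y,z]$ with the observation that conjugation acts trivially on $G_\ell/G_{\ell+1}$ in the relevant range: for $u\in G_\ell$ one has $w^{-1}uw=u\,[u,w]$ with $[u,w]\in[G_\ell,G_{\ell-1}]\le G_{2\ell-1}\le G_{\ell+1}$ when $w\in G_{\ell-1}$ (this is where $\ell\ge2$ enters), and $y^{-1}uy=u\,[u,y]$ with $[u,y]\in[G_\ell,G]=G_{\ell+1}$ when $y\in G$. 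Taking $u=[x,z]$ gives $[x,z]^{w}\equiv[x,z]$ and $[x,z]^{y}\equiv[x,z]\pmod{G_{\ell+1}}$, hence $G_{\ell+1}[x,zw]=G_{\ell+1}[x,z]\,G_{\ell+1}[x,w]$ and $G_{\ell+1}[xy,z]=G_{\ell+1}[x,z]\,G_{\ell+1}[y,z]$, the order of the factors being irrelevant since the quotient is abelian. Thus $x\mapsto G_{\ell+1}[x,z]$ and $z\mapsto G_{\ell+1}[x,z]$ are homomorphisms into an abelian group, so they commute with taking integer powers, yielding $G_{\ell+1}[x^{i},z^{j}]=(G_{\ell+1}[x,z])^{ij}$ for all $i,j\in\mathbb{Z}$. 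As indicated, the content of parts~(ii) and~(iii) is routine once part~(i) has been established, so the induction with the Three Subgroup Lemma is the step to get right.
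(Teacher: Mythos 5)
Your proof is correct and follows exactly the route the paper indicates: the paper omits the proofs (citing standard references) but notes that part~(i) is established ``by an induction argument based on the three subgroup lemma'' and part~(iii) ``by straightforward computations using the identity $[x,zy]=[x,z][x,y][z,[y,x]]^{-1}$.'' Your induction on $j$ with the modulo-$N$ form of the Three Subgroup Lemma is the intended argument for~(i), and your treatment of~(iii) via $[x,zw]=[x,w][x,z]^{w}$, $[xy,z]=[x,z]^{y}[y,z]$ together with the observation that the conjugation corrections land in $G_{\ell+1}$ (using $\ell\ge 2$) is equivalent to the paper's suggested identity; your Frattini-style argument for~(ii) is the standard one.
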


Let $N(t)$ be a Poisson random variable with mean $t$, and let
$S=\{s_a : a\in[k]\}$ be the representative set from \eqref{def_S} for the jump
distribution $\mu$ of $X=(X_t)_{t\ge 0}$. For a collection $\{U_i\}_{i\ge 1}$ of i.i.d.\ uniform
elements over $G$, it is straightforward to see that
$
U_i^{-1} s\, U_i \overset{\mathrm{i.i.d.}}{\sim} \Unif(\Cl(s)).
$
Thus, the jumps of $X$ can be written as $\{U_i^{-1} s_{\sigma_i} U_i\}_{i\ge 1}$, where
$\{\sigma_i\}_{i\ge 1}$ are i.i.d.\ with
\[
\P(\sigma_i = a) = \mu\bigl(\Cl(s_a)\bigr), \qquad a\in[k],
\]
where $\mu$ need not be uniform across conjugacy classes.
Consequently, $X_t$ admits the representation
\beq\label{seq_X}
X_t
= \prod_{i=1}^{N(t)} \bigl(U_i^{-1} s_{\sigma_i} U_i\bigr).
\eeq

Inspired by the approach of \cite{hermon2024cutoff}, we analyze $X_t$ via the decomposition arising from the quotient series $\{G_\ell/G_{\ell+1}\}_{\ell\in[L]}$.  To this end, it is convenient to express the i.i.d.\ uniform variables $\{U_i\}_{i\ge1}$ in terms of the same quotient structure.   For each $\ell\in[L]$, let $R_\ell \subseteq G_\ell$ be a set of \emph{representatives} for the quotient $G_\ell/G_{\ell+1}$; that is, $|R_\ell| = |G_\ell/G_{\ell+1}|$ and $G_\ell/G_{\ell+1} = \{ G_{\ell+1} g : g \in R_\ell \}.$

\begin{lemma}[Corollary~6.4 in \cite{hermon2102cutoff}]\label{U_decomp}
Let $\{U^{(\ell)}\}_{\ell\in[L]} $ be independent random variables with $U^{(\ell)} \sim \Unif(R_\ell)$.
Then we have $G_{\ell+1} U^{(\ell)} \sim \Unif(G_\ell/G_{\ell+1})$. Moreover, the product 
$
U := \prod_{\ell=1}^L U^{(\ell)}
$
is uniformly distributed on $G$.
\end{lemma}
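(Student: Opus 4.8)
The plan is to prove the two assertions in sequence, working up the lower central series. First I would establish the single-level claim: if $U^{(\ell)} \sim \Unif(R_\ell)$, then its coset $G_{\ell+1} U^{(\ell)}$ is uniform on $G_\ell/G_{\ell+1}$. This is immediate by construction, since $R_\ell$ was chosen to contain exactly one representative of each coset: pushing the uniform measure on $R_\ell$ through the (bijective, by the representative property) map $g \mapsto G_{\ell+1}g$ yields the uniform measure on the quotient. So the content is entirely in the ``moreover'' statement about the product $U = \prod_{\ell=1}^L U^{(\ell)}$.

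For the product, I would argue by downward induction on the level, or equivalently by an explicit counting/measure argument. The cleanest route: show that for each $m \in \{1, \dots, L\}$, the partial product $V_m := \prod_{\ell=m}^L U^{(\ell)}$ is uniformly distributed on $G_m$, using independence of the $U^{(\ell)}$ and the fact that each $G_{\ell+1} \trianglelefteq G$ (so that $G_{\ell+1} \subseteq G_\ell$ is a normal subgroup of $G_\ell$, since by Proposition~\ref{nilpotent_properties}(i) the lower central series is a chain of normal subgroups). The base case $m = L$ is trivial: $G_{L+1} = \{\id\}$, so $R_L = G_L$ and $V_L = U^{(L)} \sim \Unif(G_L)$. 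For the inductive step, suppose $V_{m+1} \sim \Unif(G_{m+1})$ and is independent of $U^{(m)} \sim \Unif(R_m)$. Then $V_m = U^{(m)} V_{m+1}$. For any fixed $g \in G_m$, write $g = r\, h$ with $r \in R_m$ the unique representative of the coset $G_{m+1}g$ and $h = r^{-1} g \in G_{m+1}$; this decomposition is unique. Hence
\[
\P(V_m = g) = \P\bigl(U^{(m)} = r,\ V_{m+1} = h\bigr) = \frac{1}{|R_m|}\cdot \frac{1}{|G_{m+1}|} = \frac{1}{|G_m|},
\]
using $|R_m| = |G_m/G_{m+1}| = |G_m|/|G_{m+1}|$. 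Taking $m = 1$ gives $U = V_1 \sim \Unif(G)$, as desired.

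I do not anticipate a genuine obstacle here — the statement is a clean structural fact about iterated quotients, and every ingredient (normality of the $G_{\ell+1}$, uniqueness of the representative decomposition, independence) is already available. The only point requiring a little care is bookkeeping the order of multiplication and confirming that at each step $U^{(m)}$ is independent of the entire tail product $V_{m+1}$, which follows from mutual independence of the family $\{U^{(\ell)}\}_{\ell \in [L]}$; and one should note explicitly that the factorization $g = r h$ with $r \in R_m$, $h \in G_{m+1}$ is unique precisely because $R_m$ is a transversal, so that the event $\{U^{(m)} = r,\ V_{m+1} = h\}$ is the unique way to realize $\{V_m = g\}$. An alternative to the inductive computation is a pure cardinality argument: the map $R_1 \times R_2 \times \cdots \times R_L \to G$, $(r_1, \dots, r_L) \mapsto r_1 r_2 \cdots r_L$, has domain of size $\prod_\ell |G_\ell/G_{\ell+1}| = |G|$, and the same unique-factorization observation shows it is injective, hence bijective; since the product measure on the domain is uniform, its pushforward is uniform on $G$.
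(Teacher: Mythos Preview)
Your proof is correct and complete. Both the single-level claim (immediate from $R_\ell$ being a transversal) and the downward induction on the partial products $V_m$ are sound; the only subtlety---that $V_m = g$ forces the unique decomposition $U^{(m)} = r$, $V_{m+1} = r^{-1}g$---you handle correctly using that $R_m$ is a transversal and $G_{m+1} \trianglelefteq G$.

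There is nothing to compare against in the paper itself: the lemma is stated with a citation to \cite{hermon2102cutoff} (Corollary~6.4 there) and no proof is given in this paper. Your self-contained argument is exactly the kind of elementary verification one would expect for this structural fact, and your alternative bijection argument (the map $R_1 \times \cdots \times R_L \to G$ is a bijection by cardinality and injectivity) is an equally clean route.
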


In addition, we collect a few basic group-theoretic facts that will
be used later. 
\begin{lemma}\label{group_basics}
\begin{enumerate}[label=(\roman*)]
\item For fixed $s \in G$ and $U \sim \Unif(R_{\ell-1})$ with $\ell \ge 2$, $G_{\ell+1}[s,U]$ is uniformly distributed on the subgroup $\{G_{\ell+1}[s,u] : u \in R_{\ell-1}\}$ of $G_\ell/G_{\ell+1}$.
\item 
Let $Q$ be a finite abelian group (written additively), and let $H_1,\dots,H_r$ be subgroups of $Q$.  
If $\{V_i\}_{1\le i\le r}$ are independent with $V_i \sim \Unif(H_i)$ for each $i$, then
\[
\sum_{i=1}^r V_i \sim \Unif(H) \qquad \text{ where $H := \{h_1+\cdots+h_r : h_i \in H_i\}$.}
\]

\end{enumerate}
\end{lemma}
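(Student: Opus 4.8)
The plan is to reduce both parts to a single general fact: \emph{the pushforward of the uniform measure on a finite group under a group homomorphism is the uniform measure on the image}. This is immediate, since every fibre of a homomorphism $\varphi\colon A\to B$ of finite groups has the same cardinality $|\ker\varphi|$, so for $V\sim\Unif(A)$ and any $b\in\mathrm{im}\,\varphi$ we get $\P(\varphi(V)=b)=|\ker\varphi|/|A|=1/|\mathrm{im}\,\varphi|$. Both statements will follow by exhibiting the relevant random variable as such a pushforward.

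For part (i), I would fix $s\in G$ and consider the map $h\mapsto G_{\ell+1}[s,h]$ from $G_{\ell-1}$ into $G_\ell/G_{\ell+1}$. By the bilinearity recorded in Proposition~\ref{nilpotent_properties}(iii) (applied with $x=s$ and $zw=h$), this map is a group homomorphism; and by part~(i) of the same proposition, $[G_1,G_\ell]\subseteq G_{1+\ell}=G_{\ell+1}$, so it is trivial on $G_\ell$ and therefore descends to a homomorphism $\bar\varphi_s\colon G_{\ell-1}/G_\ell\to G_\ell/G_{\ell+1}$. Its image is precisely the set $\{G_{\ell+1}[s,u]:u\in R_{\ell-1}\}$ (every $h\in G_{\ell-1}$ is $G_\ell$-equivalent to some $u\in R_{\ell-1}$, and $G_{\ell+1}[s,\cdot]$ is constant on each such coset), which confirms in passing that this set is a subgroup. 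Since $R_{\ell-1}$ is a transversal for $G_{\ell-1}/G_\ell$, we have $G_\ell U\sim\Unif(G_{\ell-1}/G_\ell)$ when $U\sim\Unif(R_{\ell-1})$, and hence $G_{\ell+1}[s,U]=\bar\varphi_s(G_\ell U)$ is the pushforward of a uniform element under a homomorphism, hence uniform on $\mathrm{im}\,\bar\varphi_s$, as claimed.

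For part (ii), I would consider the addition map $\psi\colon H_1\times\cdots\times H_r\to Q$ given by $\psi(h_1,\dots,h_r)=h_1+\cdots+h_r$; since $Q$ is abelian this is a group homomorphism, with image exactly $H$. Independence of the $V_i$ gives $(V_1,\dots,V_r)\sim\Unif(H_1\times\cdots\times H_r)$, so $\sum_{i=1}^r V_i=\psi(V_1,\dots,V_r)\sim\Unif(H)$ by the general fact above. (Alternatively one can induct on $r$, reducing to $r=2$, and in that case compute directly: for subgroups $A,B\le Q$ and $x\in A+B$, the set $\{a\in A:x-a\in B\}$ is a coset of $A\cap B$, so $\P(W+V=x)=|A\cap B|/(|A|\,|B|)=1/|A+B|$ is independent of $x$.)

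The arguments are routine and essentially bookkeeping; the only point that genuinely requires the structure of nilpotent groups is the well-definedness of the map in (i) and the fact that its image is a subgroup, which is exactly what the anti-symmetric bilinear structure of Proposition~\ref{nilpotent_properties}(iii) together with the strong centrality of part~(i) are there to provide. I do not anticipate any real obstacle.
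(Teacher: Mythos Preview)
Your proposal is correct and follows essentially the same approach as the paper: both parts are deduced from the fact that a homomorphism pushes the uniform distribution on the domain to the uniform distribution on its image, applied respectively to the quotient map $G_{\ell-1}/G_\ell\to G_\ell/G_{\ell+1}$, $G_\ell x\mapsto G_{\ell+1}[s,x]$, and to the addition map $H_1\times\cdots\times H_r\to Q$. The only cosmetic difference is that you isolate the general pushforward lemma first and invoke Proposition~\ref{nilpotent_properties} explicitly for well-definedness, whereas the paper re-derives the fibre count in each part.
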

\begin{proof}

For fixed $s\in G$, note that the map 
$\phi_s : G_{\ell-1}/G_{\ell} \to G_{\ell}/G_{\ell+1}$ given by
$$
\phi_s(G_\ell x) := G_{\ell+1}[s,x], \qquad x \in G_{\ell-1},
$$
is well defined and a group homomorphism with image  $\mathrm{Im}(\phi_s)=\{G_{\ell+1}[s,u] : u \in G_{\ell-1}\}=\{G_{\ell+1}[s,u] : u \in R_{\ell-1}\}.$
Note from Lemma \ref{U_decomp} that $G_\ell U\sim \Unif(G_{\ell-1}/G_{\ell})$ since $U\sim \Unif(R_{\ell-1})$.  
For any element $G_{\ell+1}[s,y] \in \{G_{\ell+1}[s,u] : u \in R_{\ell-1}\}$, we compute
\begin{align*}
\P(\phi_s(G_\ell U)=G_{\ell+1}[s,y])&=\P(G_{\ell+1}[s,Uy^{-1}]=G_{\ell+1})=\P(\phi_s(G_\ell (Uy^{-1}))=G_{\ell+1})\\
&=\P(\phi_s(G_\ell U)=G_{\ell+1})=\frac{|\mathrm{Ker}(\phi_s)|}{|G_{\ell-1}/G_{\ell}|},
\end{align*}
which  does not depend on $y$.   Hence $\phi_s(G_\ell U)=G_{\ell+1}[s,U]$ is uniform on
$\{G_{\ell+1}[s,u]:u\in R_{\ell-1}\}$.

The proof of (ii) follows from an analogous argument upon noting that $\varphi: H_1 \times \cdots \times H_r \to Q$ with 
$\varphi(h_1,\dots,h_r) := h_1 + \cdots + h_r$ is a group homomorphism. Thus for any $x\in H$,
$$\P(\varphi(V_1,\dots,V_r)=x)=\frac{|\varphi^{-1}(x)|}{|H_1|\cdots |H_r|}=\frac{|\mathrm{ker}(\varphi)|}{|H_1|\cdots |H_r|}.$$
 As $\varphi(V_1,\dots,V_r)$ takes values in $H$ with equal probability, it is uniformly distributed over $H$.
\end{proof}

\subsection{Quotient decomposition of the walk}

By a standard application of the Cauchy--Schwarz inequality, one has
\[
4\,\|\P(X_t=\cdot)-\pi\|_{\mathrm{TV}}^{2}
    \le |G|\,\P(X_t = X'_t) - 1,
\]
where $X'=(X'_t)_{t\geq 0}$ is an independent copy of the walk $X=(X_t)_{t\geq 0}$.
This naturally leads us to study the product $X_t (X'_t)^{-1}$. 

Let $N'=N'(t)$
denote the number of jumps of $X'$ up to time $t$, and define the sequences
$\{\sigma'_i\}_{i\ge1}$ and $\{U'_i\}_{i\ge1}$ for $X'$ analogously to
$\{\sigma_i\}_{i\ge1}$ and $\{U_i\}_{i\ge1}$ in \eqref{seq_X}. For ease of notation,
we suppress the dependence on $t$ when it is clear from context. We may then
write
\beq\label{seq_X_rewritten}
X(X')^{-1}=\prod_{i=1}^{N}  \bigl(U_i^{-1} s_{\sigma_i} U_i\bigr)  \left( \prod_{j=1}^{N'}  \bigl((U'_j)^{-1} s_{\sigma'_j} U'_j\bigr) \right)^{-1}=:\prod_{i=1}^{N+N'}  \bigl(U_i^{-1} s^{\eta_i}_{\sigma_i} U_i\bigr), 
\eeq
where in the second equality we simply unify notation via relabeling and define $\eta_i =1$ for $1\leq i\leq N$ and $\eta_i=-1$ otherwise.

Following the decomposition in Lemma~\ref{U_decomp}, we construct the uniform elements
$\{U_i\}_{i\ge1}$ as follows. For each $\ell\in[L]$ and $i\ge1$, sample
$
\{U_{i,\ell}\}_{i\ge1} \overset{\mathrm{i.i.d.}}{\sim} \Unif(R_\ell),
$
and define
\[
U_i := \prod_{\ell=1}^L U_{i,\ell}, \qquad \text{ for }i\ge1.
\]
Let $\{U'_i\}_{i\ge1}$ be constructed analogously from an independent collection
$\{U'_{i,\ell}\}_{i\ge1,\;\ell\in[L]}$.

We now introduce several $\sigma$-fields that will be used in the subsequent analysis. Let $t\geq 0$ be fixed and we suppress the dependence on $t$ in the following definitions.
\begin{definition}\label{filtration}
\begin{enumerate}[label=(\roman*),leftmargin=*, itemindent=0pt]
\item Let $\widetilde{\HH}$ be the $\sigma$-field generated by $N, N'$, and the sequence $\{\sigma_i\}_{i\in[N]}, \{\sigma'_i\}_{i\in [N']}$; in other words, $\widetilde{\HH}$ contains all information about the sequences $X$ and $X'$, apart from the identities of the variables $\{U_i\}_{i\in [N]}, \{U'_i\}_{i\in [N']}$.

\item For each $\ell\in[L]$, let
\[
\FF_\ell := \sigma\bigl(\{U_{i,j}, U'_{i,j} : i\ge1,\; 1\le j\le \ell\}\bigr),
\]
and let $\FF_0$ denote the trivial $\sigma$-field.
\end{enumerate}
\end{definition}

As a final preparation for the proof, we clarify the measurability of the events of interest. For each $\ell \in [L+1]$, define
\beq\label{event_E}
\EE_\ell := \{ X (X')^{-1} \in G_\ell \}
\eeq
and note, in particular, that $\EE_{L+1}=\{X=X'\}$.
Let $\widetilde{\HH}$ and $\{\FF_i\}_{0 \le i \le L}$ be the $\sigma$-fields introduced in Definition~\ref{filtration}.  

\begin{lemma}\label{E_measurability}
For $2 \le \ell \le L+1$, define
\begin{equation}\label{seq_phi}
\varphi^{(\ell-2)}
  := \prod_{i=1}^{N+N'} s_{\sigma_i}^{\eta_i}
      \Big[ s_{\sigma_i}^{\eta_i},\; \prod_{j \le \ell-2} U_{i,j} \Big].
\end{equation}
Then we have $\EE_\ell = \{ \varphi^{(\ell-2)} \in G_\ell \}$,
and consequently, for each $2 \le \ell \le L+1$, the event $\EE_\ell$ is measurable with respect to
$\sigma(\FF_{\ell-2}, \widetilde{\HH})$.
\end{lemma}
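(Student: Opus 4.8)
The plan is to rewrite the product $X(X')^{-1} = \prod_{i=1}^{N+N'}(U_i^{-1} s_{\sigma_i}^{\eta_i} U_i)$ modulo $G_\ell$ and show that, working in the quotient $G/G_\ell$ (or rather tracking membership in $G_\ell$), all the "high-order" pieces $U_{i,j}$ with $j \ge \ell-1$ drop out, leaving exactly $\varphi^{(\ell-2)}$. The starting observation is the elementary identity $U_i^{-1} s\, U_i = s \cdot [s, U_i]$ (with $[x,y] = x^{-1}y^{-1}xy$), so that each factor equals $s_{\sigma_i}^{\eta_i}[s_{\sigma_i}^{\eta_i}, U_i]$. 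Next I would expand $[s_{\sigma_i}^{\eta_i}, U_i]$ using $U_i = \prod_{j=1}^L U_{i,j}$ together with the commutator expansion $[x, zw] = [x,z]\,[x,w]\,[w,[x,z]]^{-1}$ — or more simply, the fact (Proposition~\ref{nilpotent_properties}(iii)) that modulo $G_{\ell+1}$ the commutator is bi-linear. But here we only care about membership in $G_\ell$, not $G_{\ell+1}$, so the relevant statement is cruder: since $U_{i,j} \in G_j$, we have $[s_{\sigma_i}^{\eta_i}, U_{i,j}] \in [G, G_j] \subseteq G_{j+1}$ by Proposition~\ref{nilpotent_properties}(i). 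Hence every commutator $[s_{\sigma_i}^{\eta_i}, U_{i,j}]$ with $j \ge \ell-1$ lies in $G_\ell$, and any commutator between such a term and anything else lies in $G_{\ell+1} \subseteq G_\ell$.

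The key step is therefore to argue that replacing $U_i = \prod_{j=1}^L U_{i,j}$ by its truncation $\prod_{j \le \ell-2} U_{i,j}$ inside $\prod_i s_{\sigma_i}^{\eta_i}[s_{\sigma_i}^{\eta_i}, U_i]$ changes the product only by a factor in $G_\ell$, so that $X(X')^{-1} \in G_\ell$ if and only if $\varphi^{(\ell-2)} \in G_\ell$. I would make this precise by working in the group $G/G_\ell$: since $G_\ell \trianglelefteq G$, passing to the quotient is a homomorphism, and in $G/G_\ell$ every element of the form $[s_{\sigma_i}^{\eta_i}, U_{i,j}]$ with $j \ge \ell-1$ becomes trivial. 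One then peels the $U_{i,j}$ off one index $j$ at a time, from $j = L$ down to $j = \ell-1$: writing $U_i = V_i U_{i,j}$ with $V_i = \prod_{j' \le j-1} U_{i,j'}$ telescoping the product, and using $[s, V U_{i,j}] = [s, U_{i,j}]\,[s,V]^{U_{i,j}}$ (conjugate), one checks that modulo $G_\ell$ the factor $[s, U_{i,j}]$ and the conjugation both disappear, since conjugation by $U_{i,j} \in G_j$, $j \ge \ell-1 \ge 1$, is trivial on $G/G_\ell$ when applied to... — here one needs to be slightly careful, so the cleanest route is: in $G/G_\ell$, the image of $U_{i,j}$ for $j \ge \ell$ is already trivial, and for $j = \ell-1$ the image of $U_{i,\ell-1}$ lies in the center of $G/G_\ell$ modulo... actually it lies in $G_{\ell-1}/G_\ell$, which is central in $G/G_\ell$ because $[G_{\ell-1}, G] = G_\ell$. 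Centrality of $G_{\ell-1}/G_\ell$ in $G/G_\ell$ is the structural fact that makes both $[s, U_{i,\ell-1}]$ trivial in the quotient and the conjugations disappear; this is where Proposition~\ref{nilpotent_properties}(i) does the real work.

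Once the identity $\EE_\ell = \{\varphi^{(\ell-2)} \in G_\ell\}$ is established, the measurability claim is immediate: the right-hand side of \eqref{seq_phi} is an explicit word in the variables $N, N'$, the types $\{\sigma_i\}, \{\sigma'_i\}$ (via the $s_{\sigma_i}^{\eta_i}$, recalling $\eta_i$ is determined by $i$ versus $N$), and the $U_{i,j}$ for $j \le \ell-2$ only; all of these are $\sigma(\FF_{\ell-2}, \widetilde{\HH})$-measurable, and $\{\,\cdot \in G_\ell\}$ is a measurable event in the finite group $G$. I expect the main obstacle to be bookkeeping: carefully justifying the term-by-term peeling in a \emph{product} of $N+N'$ factors (not a single factor), making sure that the cross-commutators generated when commuting a high-order piece past the rest of the product all land in $G_{\ell+1} \subseteq G_\ell$, and handling the $j = \ell-1$ boundary case where the piece is central mod $G_\ell$ rather than outright trivial. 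Organizing the argument as a single computation in $G/G_\ell$, rather than tracking cosets by hand, should keep this manageable.
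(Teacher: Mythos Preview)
Your proposal is correct and follows essentially the same route as the paper: work in the quotient $G/G_\ell$, use $U_i^{-1} s\, U_i = s[s,U_i]$, and kill the contribution of the tail $\prod_{j\ge \ell-1} U_{i,j}$ via the fact that $G_{\ell-1}/G_\ell$ is central in $G/G_\ell$ (equivalently, $[G,G_{\ell-1}]\subseteq G_\ell$). The only cosmetic difference is that the paper groups the tail as a single element $B_i=\prod_{j\ge \ell-1}U_{i,j}\in G_{\ell-1}$ and applies the identity $[x,A_iB_i]=[x,B_i]\bigl(B_i^{-1}[x,A_i]B_i\bigr)$ once, rather than peeling the $U_{i,j}$ off one index at a time; this makes the bookkeeping shorter and also shows that your worry about cross-commutators across the $N+N'$ factors is moot, since each factor is simplified separately inside $G/G_\ell$ before the product is taken.
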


\begin{proof}
Since $G_\ell \trianglelefteq G$, we can rewrite \eqref{seq_X_rewritten} as
\beq\label{seq1}
G_\ell X (X')^{-1}
  = \prod_{i=1}^{N+N'} G_\ell \big( U_i^{-1} s_{\sigma_i}^{\eta_i} U_i \big)
  = \prod_{i=1}^{N+N'} G_\ell \big( s_{\sigma_i}^{\eta_i} [s_{\sigma_i}^{\eta_i}, U_i] \big),
\eeq
where the second equality uses the elementary identity $u^{-1} s u = s[s,u]$ for any $u,s\in G$. 

For any $x\in G$ and any $j\ge \ell-1$, since $U_{i,j}\in G_j$ we have
$
[x,U_{i,j}] \in G_{j+1} \subseteq G_\ell.
$
Write $U_i=\prod_{j=1}^L U_{i,j}$ as in Lemma~\ref{U_decomp}, and decompose
\[
U_i = A_i B_i,
\qquad\text{where}\qquad
A_i:=\prod_{j\le \ell-2} U_{i,j}
\quad\text{and}\quad
B_i:=\prod_{j\ge \ell-1} U_{i,j}.
\]
Since $G_\ell$ is normal, we have $[x,B_i] \in G_\ell$. Applying the commutator identity $[x,ab] = [x,b]\,(b^{-1}[x,a] b)$ for all $x,a,b \in G$, we obtain
$$
G_\ell[s_{\sigma_i}^{\eta_i},U_i]
=
G_\ell[s_{\sigma_i}^{\eta_i},A_iB_i]
=
G_\ell\bigl([s_{\sigma_i}^{\eta_i},B_i] (B_i^{-1}[s_{\sigma_i}^{\eta_i},A_i]B_i)\bigr)
=
G_\ell[s_{\sigma_i}^{\eta_i},A_i],
$$
where the last equality again follows from $G_\ell \trianglelefteq G$.
Substituting this into \eqref{seq1} yields
\begin{align}\label{quotient_simplification}
G_\ell X (X')^{-1}
&= \prod_{i=1}^{N+N'} G_\ell\Big( s_{\sigma_i}^{\eta_i}\,[s_{\sigma_i}^{\eta_i},A_i] \Big) \nonumber\\
&= G_\ell\left(  \prod_{i=1}^{N+N'} s_{\sigma_i}^{\eta_i}\,
      \Big[ s_{\sigma_i}^{\eta_i},\; \prod_{j\le \ell-2} U_{i,j} \Big] \right)
= G_\ell \varphi^{(\ell-2)}.
\end{align}
This establishes $
\EE_\ell
=\{\varphi^{(\ell-2)}\in G_\ell\}.$ Finally, we observe that $\varphi^{(\ell-2)}$ is measurable with respect to 
$\sigma(\FF_{\ell-2}, \widetilde{\HH})$, and therefore $\EE_\ell$ is measurable
with respect to the same $\sigma$-field.

\end{proof}

\section{Proof of main results}
\subsection{Proof of Theorem \ref{reduction_abelianization}}
\begin{proof}
The first inequality $t_{\mix}^{\TV}(G_{\ab},\varepsilon)
\leq
t_{\mix}^{\TV}(G,\varepsilon)$ follows directly from the fact that total variation distance is non-increasing under projection. We proceed to showing the second inequality. 

Let  $X=(X_t)_{t\geq 0}$ denote the rate-1 random walk and let $S=\{s_a: a\in [k]\}\subseteq G$ denote the  set associated with the jump distribution $\mu$ as defined in \eqref{def_S}; the uniformity of the stationary measure implies that $S$ generates $G$.
For each $a\in [k]$, let $N_a=N_a(t)$ denote the number of times elements in the conjugacy class $\Cl(s_a)=\{ x^{-1}s_a x:x\in G\}$ appears in the walk $X$ up to time $t$, which follows a Poisson distribution with rate $\mu\bigl(\Cl(s_a)\bigr)$. 

Define the event $\AA=\AA(t):=\{ \{G_2s_a: a\in [k], N_a(t)>0\} \text{ generates } G\}$. By the triangle inequality,
\beq\label{tv_triangle_inequality}
\left\| \P(X_t=\cdot)-\pi \right\|_{\mathrm{TV}}
\leq \left\| \P(X_t=\cdot| \AA)-\pi \right\|_{\mathrm{TV}}
+ \P(\AA^c).
\eeq
Let $X'=(X'_t)_{t\geq 0}$ be an independent copy of $X$, and let
$(N'_a)_{a\in [k]}$ and the event $\AA'$ be defined for $X'$ analogously.
By the Cauchy--Schwarz inequality, we obtain
\beq\label{L2_bound}
4 \left\|\P(X_t=\cdot | \AA)-\pi \right\|^2_{\mathrm{TV}}
\leq |G|\cdot \P\!\left(X_t=X'_t | \AA\cap\AA'\right)-1.
\eeq
For notational simplicity, we suppress the time index $t$ from now on.  Recall from \eqref{event_E} that $\EE_\ell=\{X(X')^{-1}\in G_\ell\}$ for $\ell\in[L+1]$.
It thus remains to upper bound
\beq\label{D(t)}
D(t)
:= |G|\cdot \P\!\left(X(X')^{-1}=\id | \AA\cap\AA'\right)-1
= |G|\cdot \P(\EE_{L+1}| \AA\cap\AA')-1 .
\eeq

We start by simplifying $G_{\ell+1}X(X')^{-1}$ for each $\ell\in[L]$, following the approach of
Lemma~\ref{E_measurability}. Recall that each uniform element $U_i$ in the collection
$\{U_i\}_{i\geq 1}$ is defined via the decomposition $U_i=\prod_{\ell=1}^L U_{i,\ell}$ where $U_{i,\ell}\overset{\mathrm{i.i.d.}}{\sim} \Unif(R_\ell).$ A similar argument to that in
\eqref{quotient_simplification} yields
\begin{align*}
G_{\ell+1}X(X')^{-1}&=\prod_{i=1}^{N+N'} G_{\ell+1} \left(s^{\eta_i}_{\sigma_i} [s_{\sigma_i}^{\eta_i}, \prod_{j\leq \ell-1} U_{i,j}]\right).
\end{align*}
As $ U_{i,\ell-1}\in R_{\ell-1}\subseteq G_{\ell-1}$, we have $[s_{\sigma_i}^{\eta_i}, U_{i,\ell-1}]\in G_{\ell}$. It is not difficult to see that $G_{\ell+1}[s_{\sigma_i}^{\eta_i}, U_{i,\ell-1}]$ commutes with any element in $G/G_{\ell+1}$ (as their commutator lies in $G_{\ell+1}$). Hence, using $G_{\ell+1} \trianglelefteq G$, the above can be further simplified to 
\begin{align}\label{simplified_quotient}
\nonumber G_{\ell+1}X(X')^{-1}&=\prod_{i=1}^{N+N'} G_{\ell+1} \left(s^{\eta_i}_{\sigma_i} [s_{\sigma_i}^{\eta_i}, \prod_{j\leq \ell-2} U_{i,j}][s_{\sigma_i}^{\eta_i},U_{i,\ell-1}]\right)\\
&= G_{\ell+1} \varphi^{(\ell-2)} \cdot G_{\ell+1} \left( \prod_{i=1}^{N+N'} [s_{\sigma_i}^{\eta_i},U_{i,\ell-1}]\right)=:G_{\ell+1} \varphi^{(\ell-2)} \cdot G_{\ell+1} f^{(\ell-1)},
\end{align}
where $\varphi^{(\ell-2)}= \prod_{i=1}^{N+N'} s_{\sigma_i}^{\eta_i}
      [ s_{\sigma_i}^{\eta_i},\; \prod_{j \le \ell-2} U_{i,j}]$ as in  \eqref{seq_phi}, and we define $f^{(\ell-1)}:=\prod_{i=1}^{N+N'} [s_{\sigma_i}^{\eta_i}, U_{i,\ell-1}]\in G_\ell.$

Note that both events $\AA$ and $\AA'$ are measurable with respect to the
$\sigma$-field $\widetilde\HH$ in Definition~\ref{filtration}. By
Lemma~\ref{E_measurability}, the event
$\EE_\ell=\{\varphi^{(\ell-2)}\in G_\ell\}$ is measurable with respect to
$\sigma(\widetilde\HH,\FF_{\ell-2})$.  We can therefore observe from
\eqref{simplified_quotient} that
\begin{align}\label{conditional_prob}
\nonumber\1\{\EE_{\ell}\cap\AA\cap\AA'\} \cdot \P(\EE_{\ell+1}|\FF_{\ell-2},\widetilde\HH)&=\1\{\EE_{\ell}\cap\AA\cap\AA'\} \cdot\P(G_{\ell+1}\varphi^{(\ell-2)}\cdot G_{\ell+1}f^{(\ell-1)}=G_{\ell+1}|\FF_{\ell-2},\widetilde\HH)\\
\nonumber &=\1\{\EE_{\ell}\cap\AA\cap\AA'\} \cdot  \P( G_{\ell+1}f^{(\ell-1)}=(G_{\ell+1}\varphi^{(\ell-2)})^{-1}| \FF_{\ell-2},\widetilde\HH)\\
&\leq\1\{\EE_{\ell}\cap\AA\cap\AA'\} \cdot  \max_{x\in G_{\ell}} \P( G_{\ell+1}f^{(\ell-1)}=G_{\ell+1}x| \FF_{\ell-2},\widetilde\HH).
\end{align}
To analyze the distribution of $G_{\ell+1} f^{(\ell-1)}$, we first use the fact that
$G_\ell/G_{\ell+1}$ is Abelian to write
\beq\label{f_sum}
G_{\ell+1} f^{(\ell-1)}
= \sum_{i=1}^{N+N'} G_{\ell+1}\bigl[s_{\sigma_i}^{\eta_i}, U_{i,\ell-1}\bigr].
\eeq
By Proposition~\ref{nilpotent_properties}(iii), we may remove the random signs
$\{\eta_i\}_{i\ge 1}$ by rewriting
\[
G_{\ell+1}\bigl[s_{\sigma_i}^{\eta_i}, U_{i,\ell-1}\bigr]
= G_{\ell+1}\bigl[s_{\sigma_i}, U_{i,\ell-1}^{\eta_i}\bigr]
= G_{\ell+1}\bigl[s_{\sigma_i}, U'_{i,\ell-1}\bigr],
\]
where $U'_{i,\ell-1}$ denotes the unique representative in $R_{\ell-1}$ of the coset
$G_\ell U_{i,\ell-1}^{\eta_i}$, which is again uniformly distributed on $R_{\ell-1}$. Grouping the terms in \eqref{f_sum} according to the values of $\sigma_i$, we may again rewrite \eqref{f_sum} as
\beq\label{f_sum_grouped}
G_{\ell+1} f^{(\ell-1)}
= \sum_{a=1}^k \sum_{j=1}^{N_a+N'_a}
G_{\ell+1}\bigl[s_a, U^{(a)}_{j,\ell-1}\bigr],
\eeq
where the collection $\{U^{(a)}_{j,\ell-1}\}_{j\ge 1,a\in[k]}$ consists of
i.i.d.\ uniform elements of $R_{\ell-1}$.

By Lemma~\ref{group_basics}(i), each random variable
$G_{\ell+1}[s_a, U^{(a)}_{j,\ell-1}]$ is i.i.d.\ uniform on the subgroup
$
\{ G_{\ell+1}[s_a, u] : u \in R_{\ell-1} \}.
$
Hence we have
\[
\sum_{j=1}^{N_a+N'_a} G_{\ell+1}\bigl[s_a, U^{(a)}_{j,\ell-1}\bigr]
\sim \Unif(H_{a,\ell})\quad \text{where }
H_{a,\ell}
:=
\begin{cases}
\{ G_{\ell+1}[s_a, u] : u \in R_{\ell-1} \}, & \text{if } N_a+N'_a \ge 1,\\[4pt]
\{ G_{\ell+1} \}, & \text{if } N_a+N'_a = 0 .
\end{cases}
\]
Applying Lemma~\ref{group_basics}(ii), we conclude that
$G_{\ell+1} f^{(\ell-1)}$, as the sum of independent uniform elements on the
subgroups $\{H_{a,\ell}\}_{a\in[k]}$, is uniform on
\[
H_\ell
:= \sum_{a\in[k]} H_{a,\ell}
= \{ h_1+\cdots+h_k : h_a \in H_{a,\ell} \}.
\]

Finally, on the event $\AA \cap \AA'$, we have $H_\ell = G_\ell / G_{\ell+1}$. Consequently, conditionally on $\sigma(\FF_{\ell-2}, \widetilde{\HH})$, $G_{\ell+1} f^{(\ell-1)}$ is uniformly distributed on $G_\ell / G_{\ell+1}$. It then follows from \eqref{conditional_prob} that,
\[
\P(\EE_{\ell+1} \cap \AA \cap \AA')
= \1\{\EE_\ell \cap \AA \cap \AA'\}\P(\EE_{\ell+1} \mid \FF_{\ell-2}, \widetilde{\HH})
\le \1\{\EE_\ell \cap \AA \cap \AA'\} \,|G_\ell / G_{\ell+1}|^{-1}.
\]
Taking expectations on both sides with respect to $\sigma(\FF_{\ell-2}, \widetilde{\HH})$ yields
\[
\P(\EE_{\ell+1}\cap \AA \cap \AA')
\le |G_\ell / G_{\ell+1}|^{-1}\,\P(\EE_\ell\cap \AA \cap \AA'), \qquad 2 \le \ell \le L.
\]
Iterating this bound over $\ell$ gives $\P( \EE_{L+1}\cap \AA\cap\AA')\leq  |G_2|^{-1}\P(\EE_2\cap \AA\cap\AA')$, and hence
\begin{align}\label{D_upperbound}
\nonumber D(t)&=|G|\cdot \P( \EE_{L+1} | \AA\cap\AA')-1\leq |G_{\ab}|\cdot \P(\EE_2| \AA\cap\AA')-1
\leq \frac{|G_{\ab}|\cdot \P(\EE_2)}{\P( \AA\cap\AA')}-1\\
\nonumber &=\frac{(d_{\ell^2}^\ab(t))^2+\P(( \AA\cap\AA')^c)}{\P( \AA\cap\AA')}\\
&\leq (1+2\P(( \AA\cap\AA')^c))\left((d_{\ell^2}^\ab(t))^2+\P(( \AA\cap\AA')^c)\right)
\end{align}
where $(d_{\ell^2}^\ab(t))^2=|G_{\ab}|\cdot \P(\EE_2)-1$ equals the $\ell^2$-distance to stationarity of the projected walk $X^\ab$ onto the abelianization $G_{\ab}$, and the last line follows from the the elementary inequality  $(1-u)^{-1}\leq 1+2u$ for $0\leq u\leq 1/2$ (assuming $\P(( \AA\cap\AA')^c)\leq 1/2$).

Let  $t=\max\{ t^{\ell^2}_{\mix}(G_{\ab},\ep/2), \mu_*^{-1}(\log k+2\log(4/\ep))\}$. As $\{N_a: a\in [k]\}$ are independent Poisson random variables with rate $\mu\bigl(\Cl(s_a)\bigr)$, a simple union bound yields 
$$\P(\AA^c)\leq \sum_{i=1}^k  e^{-\mu(\Cl(s_a))t}\leq ke^{-\mu_* t}\leq (\ep/4)^2$$
 and thus $\P((\AA\cap \AA')^c)\leq \ep^2/8.$ Combining \eqref{tv_triangle_inequality}, \eqref{L2_bound} and \eqref{D_upperbound} gives
\begin{align*}
\left\| \P(X_t=\cdot)-\pi \right\|_{\mathrm{TV}}\leq \frac{1}{2}\sqrt{D(t)}+\P(\AA^c)\leq \frac{1}{2} \sqrt{ (1+\ep^2/4)( (\ep/2)^2+\ep^2/8) }+\ep^2/16\leq \ep
\end{align*}
for any $\ep\in(0,1)$. We have therefore shown that 
$$
t_{\mix}^{\TV}(G,\varepsilon)
\leq\max\{ t^{\ell^2}_{\mix}(G_{\ab},\ep/2), \mu_*^{-1}(\log k+2\log(4/\ep))\},$$
and the proof is complete. 
\end{proof}

\subsection{Cutoff in Examples \eqref{superclass_walk} and \eqref{superclass_walk_nestoridi}}\label{sec:cutoff}

\begin{proof}[Proof of Corollary \ref{cutoff}]
We will apply Theorem \ref{reduction_abelianization} to prove matching upper and lower bounds of mixing times (up to lower order errors).
In both parts (a) and (b), the projected walks are product chains on
$\ZZ_p^{n-1}$ with independently evolving coordinates. Since the analyses are
entirely analogous, we focus on proving part (a) and only sketch the argument for part (b).

In part (a), the projected walk $X^\ab$ is a simple random walk on
$G_{\ab} \cong \ZZ_p^{n-1}$, where each coordinate evolves as an independent simple
random walk at rate $1/(n-1)$. Accordingly, we may represent
\[
X^{\ab}_t = \bigotimes_{i=1}^{n-1} Z_{i,s},
\]
where $\{Z_{i,\cdot}\}_{i=1}^{n-1}$ are independent rate-$1$ simple random walks on
$\ZZ_p$ and $s = t/(n-1)$.

To obtain the lower bound on the mixing time, we consider the discrete-time simple
random walk on $\ZZ_p$, whose transition matrix has eigenvalues
$\{\cos(2\pi j/p)\}_{j=0}^{p-1}$; see, for example, Section~12.3 of
\cite{wilmer2009markov}. In particular, the spectral gap is
$\gamma = 1 - \cos(2\pi/p)$. Applying Theorem~20.7 of \cite{wilmer2009markov} to
$X^\ab$, viewed as a rate-$1$ product chain on $\ZZ_p^{n-1}$, it follows
immediately that for any $\ep \in (0,1)$,
\beq\label{tv_lb}
t^{\TV}_{\mix}(G,1-\ep)\geq t_{\mix}^{\TV}(G_{\ab},1-\ep)\geq \frac{n-1}{2\gamma}\left(\log (n-1)-\log (8\log(1/\ep))\right).
\eeq

We proceed to upper bounding $t^{\TV}_{\mix}(G,\ep)$. Let $d_{\ell^2}^\ab(t)$ denote the $\ell^2$-distance to stationarity of $X^\ab$, and let $d^{(i)}(s)$ denote the $\ell^2$-distance to stationarity of
$Z_{i,s}$, where we set $s=t/(n-1)$. By independence among coordinates we can readily see that 
\begin{align}\label{Fourier_bound}
\nonumber d^\ab_{\ell^2}(t)^2+1&=p^{n-1}\P(X_t=X'_t)=\prod_{i=1}^{n-1} p\P(Z_{i, s}=Z'_{i,s})=\prod_{i=1}^{n-1} \left((d^{(i)}(s))^2+1\right)\\
 &\leq \exp\left( \sum_{i=1}^{n-1}(d^{(i)}(s))^2\right)= \exp\left( (n-1)\sum_{j=1}^{p-1} e^{-2(1-\cos(2\pi j/p))s}\right),
 \end{align}
where the last equality is a classical identity obtained by expanding $(d^{(i)}(s))^2$ through the Fourier representation of the walk on $\ZZ_p$. 

If $p=p_n$ is uniformly upper bounded over $n$ by some constant $M$, then we have
$$ (d^\ab_{\ell^2}(t))^2\leq \exp\left( (n-1)pe^{-(1-\cos(2\pi /p))2s}\right)-1\leq e^{\ep/4}-1\leq \ep/2$$
when $t\geq \frac{(n-1)\log (n-1)+(n-1)\log (8M/\ep)}{2(1-\cos(2\pi/p))}$. Note that in this case $\mu_*=1/(n-1)$ and hence Theorem \ref{reduction_abelianization}  implies that 
\begin{align*}
t^{\TV}_{\mix}(G,\ep)&\leq  \max\{ t_{\mix}^{\ell^2}(G_{\ab},\varepsilon/2), (n-1)\log (n-1)+2(n-1)\log(4/\ep)\}\\
&\leq  \frac{(n-1)\log (n-1)}{2(1-\cos(2\pi/p))}+(n-1)C_{\ep,M}
\end{align*}
for some constant $C_{\ep,M}>0$ dependent on $\ep,M$. Together with \eqref{tv_lb}, this completes the proof of cutoff in the case that $p=p_n$ is uniformly upper bounded.

Now suppose $p=p_n$ diverges as $n\to\infty$. For arbitrarily small $\delta>0$, when $x>0$ is sufficiently small one has $ 1-\cos x\geq (1-\delta) x^2/2$. Let $J=\{1,\dots, \floor{ \sqrt{p}}\}\cup \{ p-\floor{ \sqrt{p}}, \dots,p-1\}$. Then for any $j\in J$, we have $\widetilde j:=\min\{j,p-j\}\leq  \floor{ \sqrt{p}}$ and
$$1-\cos(2\pi j/p)=1-\cos(2\pi \widetilde j/p)\geq  (1-\delta)\frac{2\pi^2 \widetilde j^2}{p^2}\geq (1-\delta)\frac{2\pi^2 \widetilde j}{p^2}.$$

Hence,
\begin{align*}
\sum_{j=1}^{p-1} e^{-2(1-\cos(2\pi j/p))s}
&\le \sum_{j\in J} e^{-2(1-\cos(2\pi j/p))s}
   + \sum_{j\in [p-1]\setminus J} e^{-2(1-\cos(2\pi j/p))s}\\
&\le 2\sum_{m=1}^{\lfloor\sqrt p\rfloor} e^{-4(1-\delta)\pi^2 m s/p^2}
   + p\, e^{-2(1-\cos(2\pi\lfloor\sqrt p\rfloor/p))s}\\
&\le  4e^{-4(1-\delta)\pi^2 s/p^2} + p\,e^{-2\pi^2 s/p},
\end{align*}
where the last line follows from the geometric sum $\sum_{m=1}^{\infty} e^{-4(1-\delta)\pi^2 m s/p^2}\leq 2e^{-4(1-\delta)\pi^2 s/p^2}$ when $e^{-4(1-\delta)\pi^2 s/p^2}\le 1/2$. Note this is satisfied when  $t\geq \frac{(1+2\delta)p^2 (n-1)\log(n-1)}{4\pi^2}$ and $s=t/(n-1)$.

Let $\delta\in (0,1/4)$ be arbitrarily small. Plugging the above bound into \eqref{Fourier_bound} gives
\begin{align*}
d^\ab_{\ell^2}(t)^2
&\le \exp\!\left((n-1)\Bigl(4e^{-4(1-\delta)\pi^2 s/p^2}
      + p e^{-2\pi^2 s/p}\Bigr)\right)-1\\
&\le \exp\!\left(4(n-1)^{-\delta/2}
      + p(n-1)^{1-(1+2\delta)p/2}\right)-1\\
&\le 8(n-1)^{-\delta/2}
      + 2p(n-1)^{1-(1+2\delta)p/2}
= o_n(1),
\end{align*}
where in the last step we used that the exponent is $o_n(1)$ and the inequality
$e^u-1\le 2u$ for $u$ sufficiently small. Therefore, for any $\ep\in (0,1)$ and any $\delta\in (0,1/4)$, we have 
$$ t^{\TV}_{\mix}(G,\ep)\leq \max\{t_{\mix}^{\ell^2}(G_{\ab},\varepsilon/2),(n-1)\log (n-1)+2(n-1)\log(4/\ep)\}\leq \frac{(1+2\delta)p^2 (n-1)\log(n-1)}{4\pi^2}$$
when $n$ is sufficiently large.  Combined with \eqref{tv_lb}, this shows that the walk exhibits cutoff at time
$$
\frac{p^2(n-1)\log(n-1)}{4\pi^2},
$$
which is asymptotically equivalent to
$\frac{(n-1)\log(n-1)}{2(1-\cos(2\pi/p))}$ when $p=p_n\to\infty$.

The proof of part (b) follows analogously upon noting the eigenvalues of a simple random walk on $\ZZ_p$ with jumps $\{\pm 1, \pm b\}$ are $\{\frac{(\cos(2\pi j/p)+\cos(2\pi b j/p)}{2}\}_{j=0}^{p-1}$, where $b=\floor{\sqrt{a}}$.

\end{proof}

\bibliographystyle{plain}
\bibliography{conjugacy-invariant}
\end{document}